\newtheorem{Thm}{Theorem}[section]
\newtheorem{Prop}[Thm]{Proposition}
\newtheorem{Lem}[Thm]{Lemma}
\theoremstyle{remark}
\newtheorem{Rem}[Thm]{Remark}
\newtheorem*{Ack}{Acknowledgment}
\theoremstyle{definition}
\newtheorem{Def}[Thm]{Definition}
\title[Deformation quantization with generators and relations]{Deformation quantization with generators and relations}
\author{Damien Calaque}
\author{Giovanni Felder}
\author{Carlo A. Rossi}
\address{D.C.: Institut Camille Jordan, Universit\'e Claude Bernard Lyon 1, 43
boulevard du 11 novembre 1918, F-69622 Villeurbanne Cedex France}
\address{G. F: Department of mathematics, ETH Zurich,
8092 Zurich, Switzerland}
\address{C.~A.~R.: Centro de An\'{a}lise Matem\'{a}tica, Geometria e Sistemas Din\^{a}micos,
Departamento de Matem\'atica, Instituto Superior T\'ecnico, Av.
Rovisco Pais, 1049-001 Lisboa, Portugal}
\thanks{This work has been partially supported by SNF
Grant 200020-122126}
\begin{document}


\begin{abstract}
In this paper we prove a conjecture of B.~Shoikhet which claims that two quantization procedures arising from Fourier dual constructions actually coincide.
\end{abstract}
\maketitle

\section{Introduction}

There are two ways to quantize a polynomial Poisson structure $\pi$ on the dual $V^*$ of a finite dimensional complex vector space $V$, using Kontsevich's formality as a starting point.

\medskip

The first (obvious) way is to consider the image $\mathcal U(\pi_\hbar)$ of $\pi_\hbar=\hbar\pi$ through Kontsevich's $L_\infty$-quasi-isomorphism
$$
\mathcal U:\mathrm T_{\rm poly} (V^*)\longrightarrow \mathrm D_{\rm poly}(V^*)\,,
$$
and to take ${\rm m}_\star:={\rm m}+\mathcal U(\pi_\hbar)$ as a $\star$-product quantizing $\pi$, $\mathrm m$ being the standard product on $\mathrm S(V)=\mathcal O_{V^*}$.

\medskip

The main idea, due to B.~Shoikhet \cite{Sh}, behind the second (less obvious) way is to deform the relations of $\mathrm S(V)$ instead of the
product ${\rm m}$ itself. 

Consider for example a constant Poisson structure $\pi$ on $V^*$: the deformation quantization of $\mathrm S(V)$ w.r.t.\ $\hbar\pi$ is the Moyal--Weyl algebra $\mathrm S(V)[\![\hbar]\!]$ with Moyal product $\star$ given by
\[
f_1\star f_2=\mathrm m\!\left(\exp{\frac{\hbar\pi}2}(f_1\otimes f_2)\right), 
\]
where $\pi$ is understood here as a bidifferential endomorphism of $\mathrm S(V)\otimes\mathrm S(V)$.
On the other hand, it is well-known that the Moyal--Weyl algebra associated to $\pi$ is isomorphic to the free associative algebra over $\mathbb C[\![\hbar]\!]$ with generators $x_i$ (for $\{x_i\}$ a basis of $V$) by the relations
\[
x_i\star x_j-x_j\star x_i=\hbar \pi_{ij}.
\]
The construction we are interested in generalizes to any polynomial Poisson structure $\pi$ on $V^*$ the two ways of characterizing the Moyal--Weyl algebra associated to $\pi$.

More conceptually, $\mathrm S(V)$ is a quadratic Koszul algebra of the form $T(V)/\langle R\rangle$, where $R$ is the subspace of $V^{\otimes 2}$ spanned by vectors of the form $x_i\otimes x_j-x_j\otimes x_i$, $\{x_i\}$ as in the previous paragraph.
The right-hand side of the identity $\mathrm S(V)=\mathrm T(V)/\langle R\rangle$ can be viewed as the $0$-th cohomology of the free associative dg (short for differential graded from now on) algebra $\mathrm T(\wedge^-(V))$ over $\mathbb C$, where $\wedge^-(V)$ is the graded vector space $\wedge^-(V)=\bigoplus_{p=-d+1}^0 \wedge^-(V)_p=\bigoplus_{p=-d+1}^0 \wedge^{-p+1}(V)$ and differential $\delta$ on generators $\{x_{i_1},x_{i_1,i_2},\dots\}$ of $\wedge^-(V)$ specified by 
\[
\delta (x_{i_1})=0,\ \delta (x_{i_1,i_2})=x_{i_1}\otimes x_{i_2}-x_{i_2}\otimes x_{i_1},\ \text{etc.}
\]
Observe that the differential $\delta$ dualizes the product of the graded commutative algebra $\wedge(V^*)$: in fact, $\wedge(V^*)$ is the Koszul dual of $\mathrm S(V)$, and the above complex comes from the identification $\mathrm S(V)=\mathrm{Ext}_{\wedge(V^*)}(\mathbb C,\mathbb C)$ by explicitly computing the cohomology on the right-hand side w.r.t.\ the bar resolution of $\mathbb C$ as a (left) $\wedge(V^*)$-module (the above dg, short for differential graded, algebra is the cobar construction of $\mathrm S(V)$, and $\delta$ is the cobar differential).
The above dg algebra is acyclic except in degree $0$; the $0$-th cohomology is readily computed from the above formul\ae\ and equals precisely $\mathrm T(V)/\langle R\rangle$.

Therefore, the idea is to prove that the property of being Koszul and Koszul duality between $\mathrm S(V)$ and $\wedge(V^*)$ is preserved (in a suitable sense, which will be clarified later on) by deformation quantization.

Namely, one makes use of the graded version~\cite{CF} of Kontsevich's formality theorem, applied to the Fourier dual space $V[1]$.
We then have an $L_\infty$-quasi-isomorphism
$$
\mathcal V:T_{\rm poly}(V^*)\cong T_{\rm poly}(V[1])\longrightarrow D_{\rm poly}(V[1])\,,
$$
and the image $\mathcal V(\widehat{\pi_\hbar})$ of
$\widehat{\pi_\hbar}$, where $\widehat{\bullet}$ is the isomorphism
$\mathrm T_{\rm poly}(V^*)\cong \mathrm T_{\rm poly}(V[1])$ of dg Lie
algebras (graded Fourier transform), defines a deformation quantization of the 
graded commutative algebra $\wedge(V^*)$ as a (possibly curved) $A_\infty$-algebra.

In the context of the Formality Theorem with $2$ branes~\cite{CFFR}, the deformation quantization of $\wedge(V^*)$ is Koszul dual (in a suitable sense) w.r.t.\ the first deformation quantization of $\mathrm S(V)$, and the (possibly curved) $A_\infty$-structure on the deformation quantization of $\wedge(V^*)$ induces a deformation $\delta_\hbar$ of the cobar
differential $\delta$, which in turn produces a deformation $\mathcal I_\star$ of the two-sided ideal
$\mathcal I=\langle R\rangle$ in $\mathrm T(V)$ of defining relations of $\mathrm
S(V)$.

We are then able to prove the following result, first conjectured by
Shoikhet in \cite[Conjecture 2.6]{Sh2}:
\begin{Thm}[see Theorem \ref{t-sh}]\label{main}
Given a polynomial Poisson structure $\pi$ on $V^*$ as above, the algebra $A_\hbar:=\big(\mathrm S(V)[\![\hbar]\!],{\rm m}_\star\big)$
is isomorphic to the quotient of $\mathrm T(V)[\![\hbar]\!]$ by the
two-sided ideal $\mathcal I_\star$; the isomorphism is an
$\hbar$-deformation of the standard symmetrization map from $\mathrm
S(V)$ to $\mathrm T(V)$.
\end{Thm}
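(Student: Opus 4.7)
The strategy is to use the formality theorem with two branes of \cite{CFFR}, applied in the configuration where the bulk is $V^*$ and the two branes are $V^*$ itself and the origin $\{0\}\subset V^*$. In addition to Kontsevich's $\mathcal U$ (producing $\mathrm m_\star$) and its graded Fourier dual $\mathcal V$ (producing the $A_\infty$-structure $\cdot_\star$ on $B_\hbar:=(\wedge(V^*)[\![\hbar]\!],\cdot_\star)$), this setup also produces a compatible $A_\infty$-bimodule structure on $\mathrm S(V)[\![\hbar]\!]$ over the pair $(A_\hbar,B_\hbar)$ that realizes the deformed Koszul duality. By construction, $\mathcal I_\star$ is the image under $\delta_\hbar$ of the degree $-1$ part of the deformed cobar complex $\mathrm T(\wedge^-(V))[\![\hbar]\!]$, so $\mathrm T(V)[\![\hbar]\!]/\mathcal I_\star$ is identified with its $0$-th cohomology.

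First I would construct the candidate morphism $\sigma_\hbar\colon A_\hbar\to \mathrm T(V)[\![\hbar]\!]/\mathcal I_\star$ as the degree-zero Taylor component of the $A_\infty$-bimodule quasi-isomorphism provided by the 2-brane formality, evaluated at the Maurer--Cartan element $\pi_\hbar\in\mathrm T_{\mathrm{poly}}(V^*)[\![\hbar]\!]$. At $\hbar=0$ the map $\sigma_\hbar$ collapses to the classical symmetrization $\mathrm S(V)\to\mathrm T(V)/\langle R\rangle\cong\mathrm S(V)$, which is the identity; thus $\sigma_\hbar$ is the identity modulo $\hbar$.

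Second, I would verify that $\sigma_\hbar$ is an algebra morphism. This is the diagrammatic core of the argument: the Feynman configurations contributing to $\mathcal U(\pi_\hbar)$ and those contributing to $\delta_\hbar$ both arise from Kontsevich-type integrals over compactified configuration spaces of points in the upper half plane, the only difference being the boundary conditions prescribed on the two branes. The 2-brane Stokes theorem identifies the sum of these contributions with $\mathrm m_\star\circ(\sigma_\hbar\otimes\sigma_\hbar)$ modulo $\mathcal I_\star$. Combined with the fact that $\sigma_0$ is an isomorphism of graded vector spaces, an induction on the $\hbar$-adic filtration then upgrades $\sigma_\hbar$ to an isomorphism of topological $\mathbb C[\![\hbar]\!]$-algebras.

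The principal obstacle is the second step: aligning the configuration-space integrals living on the two geometrically distinct sides of the Fourier duality (one over $V^*$, the other over $V[1]$), keeping track of all boundary strata of the 2-brane compactification, and showing that the unwanted contributions are precisely absorbed into $\mathcal I_\star$. Once this diagrammatic compatibility is secured, the remaining $\hbar$-adic and homological arguments proceed in a routine fashion.
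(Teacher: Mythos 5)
Your plan follows the paper's route in outline: invoke the two-brane formality theorem of \cite{CFFR} with branes $V^*$ and $\{0\}$, realize the deformed Koszul duality through an $A_\infty$-bimodule, read off the candidate map as a Taylor component of the resulting quasi-isomorphism, and conclude by an $\hbar$-adic/classical-limit argument. But two things need fixing. First, the Koszul-duality bimodule is not $\mathrm{S}(V)[\![\hbar]\!]$: it is a flat deformation $K_\hbar$ of the \emph{one-dimensional} augmentation $A$-$B$-bimodule $K=\mathbb C$ (functions on the intersection of the two branes), and the map you call $\sigma_\hbar$ is the first Taylor component $\mathrm{L}^1_{A_\hbar}$ of the left-action morphism $\mathrm{L}_{A_\hbar}\colon A_\hbar\to\underline{\mathrm{End}}_{-B_\hbar}(K_\hbar)$.

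Second, and more seriously, the step you single out as the principal obstacle --- a fresh configuration-space/Stokes comparison across the Fourier duality to show that $\sigma_\hbar$ is multiplicative modulo $\mathcal I_\star$ --- is a genuine gap as stated, and it is also unnecessary: the paper dissolves it rather than solves it. The hard analytic content is already packaged in the cited two-brane result, namely that $\mathrm{L}_{A_\hbar}$ is an $A_\infty$-quasi-isomorphism; no further diagrammatics are needed. What remains is homological algebra: the normalized endomorphism dg algebra $\underline{\mathrm{End}}_{-B^+_\hbar}(K_\hbar)$ is identified with the deformed cobar complex $\mathrm{T}(B^+[1]^*)[\![\hbar]\!]$, concentrated in non-positive degrees with $H^0=\mathrm{T}(V)[\![\hbar]\!]/\mathcal I_\star$; since $A_\hbar$ sits in degree $0$ with zero differential, the defect of multiplicativity of $\mathrm{L}^1_{A_\hbar}$ equals $\delta_\hbar\bigl(\mathrm{L}^2_{A_\hbar}(\cdot,\cdot)\bigr)$, a coboundary, hence vanishes in $H^0$. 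The quasi-isomorphism property, together with the fact that the cohomology is concentrated in degree $0$ at $\hbar=0$ and hence stays so over $\mathbb C[\![\hbar]\!]$, then yields the algebra isomorphism, and the symmetrization statement is the computation $\mathrm{L}^1_{A_\hbar}(x_i)=x_i+O(\hbar)$ combined with classical Koszul duality at $\hbar=0$. If you insist on your direct diagrammatic comparison instead, you would essentially be re-proving the two-brane formality theorem; replace that step by the formal argument above and your plan closes.
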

\begin{Rem}\label{r-formal}
We mainly consider here a formal polynomial Poisson structure of the form $\hbar\pi$, but all the arguments presented here apply as well to any formal polynomial Poisson structure $\pi_\hbar=\hbar\pi_1+\hbar^2\pi_2+\cdots$, where $\pi_i$ is a polynomial bivector field.
\end{Rem}

The paper is organized as follows. In Section 2 we start with a
recollection on $A_\infty$-algebras and bimodules.  We then
formulate the formality theorem with two branes of \cite{CFFR} in a
form suitable for the application at hand.  After this we describe
the deformation of the cobar complex obtained from $\mathcal
V(\widehat{\pi_\hbar})$ and prove Theorem \ref{main}. We conclude
the paper with three examples, see Section~\ref{s-3}: the cases of constant,
linear, and quadratic Poisson structures.

\begin{Ack}
We express our gratitude to the anonymous referee for the careful reading of the manuscript and for many useful comments and suggestions, which have helped us improve the paper.
\end{Ack}

\section{A deformation of the cobar construction of the exterior coalgebra}\label{s-2}

\subsection{$A_\infty$-algebras and (bi)modules of finite type}\label{ss-2-1}
We first recall the basic notions of the theory of $A_\infty$-algebras
and modules, see \cite{Keller, CFFR} to fix the conventions and
settle some finiteness issues. Note that we allow non-flat
$A_\infty$-algebras in our definition. Let $\mathrm{T}(V)=\mathbb
C\oplus V\oplus V^{\otimes 2}\oplus\cdots$ be the tensor coalgebra
of a $\mathbb Z$-graded complex vector space $V$ with coproduct
$\Delta(v_1,\dots,v_n)=\sum_{i=0}^n(v_1,\dots,v_i)\otimes(v_{i+1},\dots,v_n)$
and counit $\eta(1)=1$, $\eta(v_1,\dots,v_n)=0$ for $n\geq 1$. Here
we write $(v_1,\dots,v_n)$ as a more transparent notation for
$v_1\otimes\cdots\otimes v_n\in \mathrm{T}(V)$ and set
$()=1\in\mathbb C$. Let $V[1]$ be the graded vector space with
$V[1]^i=V^{i+1}$ and let the suspension $s\colon V\to V[1]$ be the
map $a\mapsto a$ of degree $-1$. Then an $A_\infty$-algebra over
$\mathbb C$ is a $\mathbb Z$-graded vector space $B$ together with a codifferential
$\mathrm{d}_B\colon \mathrm{T}(B[1])\to \mathrm{T}(B[1])$, namely a
linear map of degree 1 which is a coderivation of the coalgebra and
such that $\mathrm{d}_B\circ \mathrm{d}_B=0$. A coderivation is
uniquely given by its components $\mathrm{d}_B^k\colon B[1]^{\otimes
k}\to B[1]$, $k\geq0$ and any set of maps $\colon B[1]^{\otimes
k}\to B[1]$ of degree $1$ uniquely extends to a coderivation. This
coderivation is a codifferential if and only if $\sum_{j+k+l=n}
\mathrm{d}_B^n\circ(\mathrm{id}^{\otimes j}\otimes
\mathrm{d}_B^k\otimes \mathrm{id}^{\otimes l})=0$ for all $n\geq0$.
The maps $\mathrm{d}_B^k$ are called {\em Taylor components} of the
codifferential $\mathrm{d}_B$. If $\mathrm{d}_B^0=0$, the
$A_\infty$-algebra is called {\em flat}. Instead of $\mathrm{d}_B^k$
it is convenient to describe $A_\infty$-algebras through the
product maps $\mathrm{m}^k_B=s^{-1}\circ \mathrm{d}_B^k\circ
s^{\otimes k}$ of degree $2-k$. If $\mathrm{m}_B^k=0$ for all $k\neq
1,2$ then $B$ with differential $\mathrm{m}^1_B$ and product
$\mathrm{m}^2_B$ is a differential graded algebra. A {\em unital}
$A_\infty$-algebra is an $A_\infty$-algebra $B$ with an element
$1\in B^0$ such that
\[
\begin{array}{ll}
\mathrm{m}_B^2(1,b)=\mathrm{m}_B^2(b,1)=b,& \forall b\in B,\\
\mathrm{m}_B^j(b_1,\dots,b_j)=0,& \text{if $b_i=1$ for some $1\leq
i\leq j$ and $j\neq 2$.}
\end{array}
\]
The first condition translates to
$\mathrm{d}_B^2(s1,b)=b=(-1)^{|b|-1}\mathrm{d}_B^2(b,s1)$, if $b\in
B[1]$ has degree $|b|$. A {\em right $A_\infty$-module $M$} over an
$A_\infty$-algebra $B$ is a graded vector space $M$ together with a
degree one codifferential $\mathrm{d}_M$ on the cofree right
$\mathrm{T}(B[1])$-comodule $M[1]\otimes \mathrm{T} (B[1])$
cogenerated by $M$. The Taylor components are $\mathrm{d}_M^j\colon
M[1]\otimes B[1]^{\otimes j}\to M[1]$ and in the unital case we
require that $\mathrm{d}_M^1(m,s1)=(-1)^{|m|-1}m$ and
$\mathrm{d}_M^j(m,b_1,\dots,b_j)=0$ if some $b_j$ is $s1$. Left
modules are defined similarly. An $A_\infty$-$A$-$B$-bimodule $M$
over $A_\infty$-algebras $A$, $B$ is the datum of a codifferential
on the $\mathrm{T}(A[1])$-$\mathrm{T}(B[1])$-bicomodule
$\mathrm{T}(A[1])\otimes M[1]\otimes \mathrm{T}(B[1])$, given by its
Taylor components $\mathrm{d}_M^{j,k}\colon A[1]^{\otimes j}\otimes
M[1]\otimes B[1]^{k}\to M[1]$. The following is a simple but
important observation.

\begin{Lem} If $M$ is an $A_\infty$-$A$-$B$-bimodule and $A$ is a flat
$A_\infty$-algebra then $M$ with Taylor components
$\mathrm{d}_M^{0,k}$ is a right $A_\infty$-module over $B$.
\end{Lem}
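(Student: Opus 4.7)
The plan is to unpack the defining equation of an $A_\infty$-bimodule structure in terms of Taylor components and then specialize to one bidegree. By definition the codifferential $\mathrm{d}_M$ on the bicomodule $\mathrm{T}(A[1])\otimes M[1]\otimes\mathrm{T}(B[1])$ squares to zero; projecting $\mathrm{d}_M\circ\mathrm{d}_M=0$ onto the cogenerator $M[1]$ produces a family of quadratic identities indexed by pairs $(j,k)\in\mathbb{Z}_{\geq 0}^{2}$. Following the standard derivation (see e.g.~\cite{Keller, CFFR}), each identity at bidegree $(j,k)$ decomposes, up to Koszul signs, into three blocks of terms: insertions of some $\mathrm{d}_A^p$ contiguously inside $(a_1,\dots,a_j)$, insertions of some $\mathrm{d}_B^q$ contiguously inside $(b_1,\dots,b_k)$, and nested compositions
\[
\mathrm{d}_M^{j_1,k_1}\bigl(a_1,\dots,a_{j_1}\bigm|\mathrm{d}_M^{j_2,k_2}(a_{j_1+1},\dots,a_j\mid m\mid b_1,\dots,b_{k_2})\bigm|b_{k_2+1},\dots,b_k\bigr),
\]
indexed by $j_1+j_2=j$ and $k_1+k_2=k$.

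Next I would specialize this relation to $j=0$. In the first block, a contiguous insertion of $\mathrm{d}_A^p$ inside an empty string of $a$'s forces $p=0$, so the block collapses to the single term $\mathrm{d}_M^{1,k}(\mathrm{d}_A^0\mid m\mid b_1,\dots,b_k)$. In the third block, the condition $j_1+j_2=0$ forces $j_1=j_2=0$, so only the purely right-sided compositions $\mathrm{d}_M^{0,k_1}\bigl(\mathrm{d}_M^{0,k_2}(m\mid b_1,\dots,b_{k_2})\bigm|b_{k_2+1},\dots,b_k\bigr)$ survive. Now the flatness hypothesis $\mathrm{d}_A^0=0$ kills the lone remaining term involving the $A$-structure, and what is left is precisely the system of quadratic identities that characterizes a right $A_\infty$-module structure on $M$ over $B$ with Taylor components $\{\mathrm{d}_M^{0,k}\}_{k\geq 0}$. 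This establishes the lemma.

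There is no substantive obstacle here: the argument is essentially a bookkeeping exercise, and the only step requiring genuine care is matching the Koszul signs when the bimodule codifferential identity is projected onto the cogenerator. For these signs I would adopt the conventions already fixed in~\cite{Keller, CFFR}. I would also remark that the same reasoning does \emph{not} yield a left $A_\infty$-module structure on $M$ over $A$: the analogous elimination on the $B$-side would require the flatness condition $\mathrm{d}_B^0=0$, which is not part of the hypothesis.
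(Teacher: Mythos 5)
Your argument is correct: projecting $\mathrm{d}_M\circ\mathrm{d}_M=0$ onto the cogenerator at bidegree $(0,k)$, the only surviving term involving the $A$-structure is the insertion of $\mathrm{d}_A^0$, which flatness kills, leaving exactly the right-module identities for $\{\mathrm{d}_M^{0,k}\}$ over $B$. The paper records this lemma as a ``simple but important observation'' and omits the proof entirely, so your bookkeeping is precisely the intended (and standard) argument; your closing remark about why the left-module analogue fails without $\mathrm{d}_B^0=0$ is also apt, since $B_\hbar$ in this paper is genuinely curved.
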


Morphisms of $A_\infty$-algebras ($A_\infty$-(bi)modules) are
(degree 0) morphisms of graded counital coalgebras (respectively,
(bi)comodules) commuting with the codifferentials. Morphisms of
tensor coalgebras and of free comodules are again uniquely
determined by their Taylor components. For instance a morphism of
right $A_\infty$-modules $M\to N$ over $B$ is uniquely determined by
the components $f_j\colon M[1]\otimes B[1]^{\otimes j}\to N[1]$.

\begin{Def}
A morphism between cofree (left-, right-, bi-) comodules over the cofree tensor coalgebra is said to be of
{\em finite type} if all but finitely many of its Taylor components
vanish.
Therefore, by abuse of terminology, we may speak of a morphism of finite type between (left-, right-, bi-) $A_\infty$-modules over an $A_\infty$-algebra.
\end{Def}

The identity morphism is of finite type and the composition of
morphisms of finite type is again of finite type.

The unital algebra of endomorphisms of finite type of a right
$A_\infty$-module $M$ over an $A_\infty$-algebra $B$ is the $0$-th cohomology of a differential graded algebra
$\underline{\mathrm{End}}_{-B}(M)=\oplus_{j\in\mathbb
Z}\underline{\mathrm{End}}^j_{-B}(M)$. The component of degree $j$
is the space of endomorphisms of degree $j$ of finite type of the
comodule $M[1]\otimes \mathrm{T}(B[1])$. The differential is the
graded commutator $\delta f=[\mathrm{d}_M,f]=\mathrm{d}_M\circ
f-(-1)^jf\circ \mathrm{d}_M$ for $f\in
\underline{\mathrm{End}}^j_{-B}(M)$. If $M$ is an
$A_\infty$-$A$-$B$-bimodule and $A$ is flat, then
$\underline{\mathrm{End}}_{-B}(M)$ is defined and the left
$A$-module structure induces a {\em left action} $\mathrm{L}_A$,
which is a morphism of $A_\infty$-algebras $A\to
\underline{\mathrm{End}}_{-B}(M)$: its Taylor components are
$\mathrm{L}^j_A(a)^k(m\otimes b)=\mathrm{d}_M^{j,k}(a\otimes
m\otimes b)$, $a\in A[1]^{\otimes j}$, $m\in M[1]$, $b\in
B[1]^{\otimes k}$.

\begin{Lem} Let $M$ be a right $A_\infty$-module over a unital
$A_\infty$-algebra $B$. Then the subspace
$\underline{\mathrm{End}}_{-B^+}(M)$ of endomorphisms $f$ such that
$f_j(m,b_1,\dots,b_j)=0$ whenever $b_i=s1$ for some $i$, is a
differential graded subalgebra.
\end{Lem}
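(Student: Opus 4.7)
The plan is to verify directly that the subspace $\underline{\mathrm{End}}_{-B^+}(M)$ is closed under two operations: the associative product (to conclude it is a subalgebra of $\underline{\mathrm{End}}_{-B}(M)$) and the commutator $\delta = [\mathrm{d}_M, -]$ (to conclude it is a subcomplex). Both closures are combinatorial consequences of the cofree description of comodule morphisms together with the unital axioms on $B$ and $M$.

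For closure under composition, I would expand
$$(f \circ g)_n(m, b_1, \ldots, b_n) = \sum_{i + j = n} f_j\bigl(g_i(m, b_1, \ldots, b_i),\, b_{i+1}, \ldots, b_n\bigr).$$
Fix an $r$ with $b_r = s1$. In each summand either $r \le i$, in which case $g_i$ vanishes by hypothesis on $g$, or $r > i$, in which case $b_r$ appears among the $B[1]$-arguments of $f_j$ and so $f_j$ vanishes by hypothesis on $f$. Hence $(f \circ g)_n$ vanishes, and the identity is trivially in $\underline{\mathrm{End}}_{-B^+}(M)$.

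For closure under $\delta$, I would expand $(\delta f)_n(m, b_1, \ldots, b_n)$ into three families of terms: (a) $\mathrm{d}_M^j \circ f_i$ with $i+j=n$, coming from $\mathrm{d}_M \circ f$; (b) $f_l \circ \mathrm{d}_M^j$ with $j+l=n$, coming from $f \circ \mathrm{d}_M$ applied at the head; (c) $\pm f_{p+1+q}$ applied after inserting $\mathrm{d}_B^k(b_{p+1},\ldots,b_{p+k})$ with $p+k+q=n$, coming from $f \circ \mathrm{d}_M$ when $\mathrm{d}_B$ acts on the tail. Assuming $b_r = s1$, all terms in which $b_r$ is passed as an argument of some $f_l$ vanish at once by the hypothesis on $f$. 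By the unitality of $\mathrm{d}_M$ (i.e.\ $\mathrm{d}_M^j(\cdot, s1, \cdot) = 0$ for $j \neq 1$) and the unitality of $\mathrm{d}_B$ (i.e.\ $\mathrm{d}_B^k(\ldots, s1, \ldots) = 0$ for $k \neq 2$, together with $\mathrm{d}_B^1(s1)=0$ because the unit is closed), the only potentially non-vanishing contributions in which $b_r$ is absorbed are: $\mathrm{d}_M^1(\cdot, s1)$ in case (a) with $r = n$; $\mathrm{d}_M^1(m, s1)$ in case (b) with $r = 1$; and $\mathrm{d}_B^2(s1, \cdot)$ or $\mathrm{d}_B^2(\cdot, s1)$ in case (c) with $k = 2$ and $r = p+1$ or $r = p+2$. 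Each such surviving contribution is (up to a Koszul sign) equal to $f_{n-1}(m, b_1, \ldots, \widehat{b_r}, \ldots, b_n)$.

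It then remains to check that these contributions cancel pairwise: for an interior $r$ with $2 \le r \le n-1$ the two case-(c) terms cancel, while for $r = 1$ the case-(b) term cancels the $r = p+1 = 1$ case-(c) term, and symmetrically for $r = n$. The cancellation is dictated by the explicit signs $\mathrm{d}_M^1(m, s1) = (-1)^{|m|-1} m$ and $\mathrm{d}_B^2(s1, b) = b = (-1)^{|b|-1}\mathrm{d}_B^2(b, s1)$ stated earlier, combined with the Koszul sign accumulated when transporting the degree-one operator $\mathrm{d}_B^k$ past $m, b_1, \ldots, b_p$ in the coderivation formula for $\mathrm{d}_M$. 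The main (and really only) obstacle in the proof is this bookkeeping of signs; the structural input is simply that $s1$ disappears from the argument list in exactly the four ways listed, and that these four ways are matched up by the unital relations.
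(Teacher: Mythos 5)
Your proposal is correct and follows essentially the same route as the paper: the same expansion of $(f\circ g)^k$ for closure under composition, the same three families of terms in $(\delta f)^k$, and the same identification of the two surviving contributions (case (c) against case (c) for interior $r$, degenerating to the $\mathrm{d}_M^1(\cdot,s1)$ terms at the endpoints) that cancel by unitality. The paper is no more explicit than you are about the final sign bookkeeping, so nothing is missing relative to its own proof.
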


We call this differential graded subalgebra the subalgebra of {\em
normalized} endomorphisms.

\begin{proof}
It is clear from the formula for Taylor components of the
composition that normalized endomorphisms form a graded subalgebra:
$(f\circ g)^k=\sum_{i+j=k} f^j\circ (g^i\otimes
\mathrm{id}_{B[1]}^{\otimes j})$. The formula for the Taylor
components of the differential of an endomorphism $f$ is
\begin{eqnarray*}
 (\delta f)^k&=&
 \sum_{i+j=k}
 \bigl(
 \mathrm{d}_M^j\circ
 (f^i\otimes\mathrm{id}_{B[1]}^{\otimes j})
 -(-1)^{|f|}
 f^i\circ
 (\mathrm{d}_M^j\otimes\mathrm{id}_{B[1]}^{\otimes i})
 \\
 &&-(-1)^{|f|} f^{k-j+1}\circ
 (\mathrm{id}_{M[1]}\otimes
 \mathrm{id}_{B[1]}^{\otimes i}
 \otimes \mathrm{d}_B^j\otimes
 \mathrm{id}_{B[1]}^{\otimes(k-i-j)})\bigr).
\end{eqnarray*}
If $f$ is normalized and $b_i=s1$ for some $i$, then only two terms
contribute non-trivially to $(\delta f)^k(m,b_1,\dots,b_k)$, namely
$f^{k-1}(m,b_1,\dots,\mathrm{d}_B^2(s1,b_{i+1}),\dots)$ (or
$\mathrm{d}_M^1(f^{k-1}(m,b_1,\dots,b_{k-1}),s1)$  if $i=k$) and
$f^{k-1}(m,b_1,\dots, \mathrm{d}_B^2(b_{i-1},s1),\dots)$ (or
$f^{k-1}(\mathrm{d}_M^1(m,s1),b_2,\dots)$ if $i=1$). Due to the unital
condition these two terms are equal up to sign, hence cancel together.
\end{proof}

The same definitions apply to $A_\infty$-algebras and $A_\infty$-bimodules 
over $\mathbb C[\![\hbar]\!]$ with completed tensor products
and continuous homomorphisms for the $\hbar$-adic topology, so that
for vector spaces $V,W$ we have
$V[\![\hbar]\!]\otimes_{\mathbb{C}[\![\hbar]\!]}W[\![\hbar]\!]=(V\otimes_{\mathbb
C} W)[\![\hbar]\!]$ and $\mathrm{Hom}_{\mathbb
C[\![\hbar]\!]}(V[\![\hbar]\!],W[\![\hbar]\!])=\mathrm{Hom}_{\mathbb
C}(V,W)[\![\hbar]\!]$. A flat deformation of an $A_\infty$-algebra $B$
is an $A_\infty$-algebra $B_\hbar$ over $\mathbb C[\![\hbar]\!]$ which,
as a $\mathbb C[\![\hbar]\!]$-module, is isomorphic to $B[\![\hbar]\!]$ and
such that $B_\hbar/\hbar B_\hbar\simeq B$. Similarly we have flat
deformations of (bi)modules. A right $A_\infty$-module $M_\hbar$ over
$B_\hbar$ which is a flat deformation of $M$ over $B$ is given by
Taylor coefficients $\mathrm{d}_{M_\hbar}^j\in\mathrm{Hom}_{\mathbb
C}(M[1]\otimes B[1]^{\otimes j},M[1])[\![\hbar]\!]$. The differential
graded algebra $\underline{\mathrm{End}}_{-B_\hbar}(M_\hbar)$ of
endomorphism of finite type is then defined as the direct sum of the
homogeneous components of $\mathrm{End}^{\mathrm{
finite}}_{\mathrm{comod}-\mathrm{T}(B[1])}(M[1]\otimes
\mathrm{T}(B[1]))[\![\hbar]\!]$ with differential
$\delta_\hbar=[\mathrm{d}_{M_\hbar},\ ]$. Thus its degree $j$ part
is the $\mathbb C[\![\hbar]\!]$-module
\[
\underline{\mathrm{End}}^j_{B_\hbar}(M_\hbar)=
\left(\oplus_{k\geq0}\mathrm{Hom}^j(M[1]\otimes B[1]^{\otimes
k},M[1])\right)\![\![\hbar]\!],
\]
where $\mathrm{Hom}^j$ is the space of homomorphisms of degree $j$
between graded vector spaces over $\mathbb C$.

Finally, the following notation will be used: if $\phi\colon
V_1[1]\otimes\cdots V_n[1]\to W[1]$ is a linear map and $V_i,W$ are
graded vector spaces or free $\mathbb C[\![\hbar]\!]$-modules, we set
\[
\phi(v_1|\cdots|v_n)=s^{-1}\phi(sv_1\otimes\cdots\otimes
sv_n),\qquad v_i\in V_i.
\]

\subsection{Formality theorem for two branes and deformation of
bimodules}\label{ss-2-2} 
Let $A=\mathrm{S}(V)$ be the symmetric algebra of a
finite dimensional vector space $V$, viewed as a graded algebra
concentrated in degree 0. Let $B=\wedge (V^*)=\mathrm{S}(V^*[-1])$
be the exterior algebra of the dual space with $\wedge^i(V^*)$ of
degree $i$~\footnote{In the case at hand, $V$ is a graded vector space concentrated in degree $0$ and the identification $\wedge(V^*)=\mathrm S(V^*[-1])$ as {\bf graded algebras} is canonical. For a more general graded vector space $V$, $\mathrm S(V^*[-1])$ and $\wedge(V^*)$ are different objects; still, $\mathrm S^n(V^*[-1])$ is canonically isomorphic to $\wedge^n(V^*)[-n]$ for every $n$ by the {\em d\'ecalage} isomorphism, which is simply the identity when $V$ is concentrated in degree $0$.}.
For any graded vector space $W$, the augmentation module
over $\mathrm S(W)$ is the unique one-dimensional module on which
$W$ acts by $0$. Let $A_\hbar=(A[\![\hbar]\!],\star)$ be the Kontsevich
deformation quantization of $A$ associated with a polynomial Poisson bivector
field $\hbar\pi$. It is an associative algebra over $\mathbb
C[\![\hbar]\!]$ with unit $1$. The graded version of the formality
theorem, applied to the same Poisson bracket (more precisely, to the image of $\hbar\pi$ w.r.t.\ the isomorphism of dg Lie algebras $T_\mathrm{poly}(A)\cong T_\mathrm{poly}(B)$), also defines a
deformation quantization $B_\hbar$ of the graded commutative algebra
$B$. However $B_\hbar$ is in general a unital $A_\infty$-algebra
with non-trivial Taylor components $\mathrm{d}_{B_\hbar}^k$ for all
$k$ including $k=0$. Still, the differential graded algebra
$\underline{\mathrm{End}}_{-B_\hbar}(M_\hbar)$ is defined since
$A_\hbar$ is an associative algebra and thus a flat
$A_\infty$-algebra. The following result is a consequence of the
formality theorem for two branes (=submanifolds) in an affine space,
in the special case where one brane is the whole space and the other
a point, and is proved in \cite{CFFR}. It is a version of the Koszul
duality between $A_\hbar$ and $B_\hbar$.

\begin{Prop}\label{p-kosz}
Let $A=\mathrm{S}(V)$, $B=\wedge (V^*)$ for some finite dimensional
vector space $V$ and let $A_\hbar$, $B_\hbar$ be their deformation
quantizations corresponding to a polynomial Poisson bracket.
\begin{enumerate}
\item[$(i)$]
There exists a one-dimensional $A_\infty$-$A$-$B$-bimodule $K$, which, as a left $A$-module and as a right $B$-module, is the augmentation
module, and such that $\mathrm{L}_A\colon A\to
\underline{\mathrm{End}}_{-B}(K)$ is an $A_\infty$-quasi-isomorphism.
\item[$(ii)$]
The bimodule $K$ admits a flat deformation $K_\hbar$ as an
$A_\infty$-$A_\hbar$-$B_\hbar$-bimodule such that
$\mathrm{L}_{A_\hbar}\colon A_\hbar\to
\underline{\mathrm{End}}_{-B_\hbar}(K_\hbar)$ is an
$A_\infty$-quasi-isomorphism.
\item[$(iii)$]
The $A_\infty$-$A_\hbar$-$B_\hbar$-bimodule $K_\hbar$ is in particular a right $A_\infty$-module over the
unital $A_\infty$-algebra $B_\hbar$. The first Taylor component
$\mathrm{L}_{A_\hbar}^1$ sends $A_\hbar$ to the differential graded
subalgebra $\underline{\mathrm{End}}_{-B^+_\hbar}(K_\hbar)$ of
normalized endomorphisms.
\end{enumerate}
\end{Prop}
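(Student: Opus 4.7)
The plan is to derive all three parts from the two-brane formality theorem of \cite{CFFR}, specialized to the pair of branes consisting of $V^*$ itself (endowed with the Poisson bivector $\hbar\pi$) and the single point $\{0\}$.

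For part $(i)$, I would specialize at $\hbar=0$. The bimodule $K$ is the one-dimensional augmentation module, with classical $A_\infty$-bimodule structure having non-vanishing Taylor components given by the two augmentations $\mathrm d_K^{1,0}$, $\mathrm d_K^{0,1}$ together with mixed components $\mathrm d_K^{j,k}$ encoding the Koszul pairing between $A$ and $B$. With this structure, $\underline{\mathrm{End}}_{-B}(K)$ is canonically isomorphic to the reduced cobar dg algebra of $B$ with values in $\mathbb C$, whose cohomology is $\mathrm{Ext}_B^\bullet(\mathbb C,\mathbb C)\cong\mathrm S(V)=A$ by the classical Koszul duality between $\wedge(V^*)$ and $\mathrm S(V)$. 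Tracking the Taylor components of $L_A$ should identify $H^\bullet(L_A)$ with this Koszul duality isomorphism, so that $L_A$ is an $A_\infty$-quasi-isomorphism.

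For part $(ii)$, I would regard $\hbar\pi$ as a Maurer--Cartan element of $\mathrm T_{\mathrm{poly}}(V^*)$ and feed it into the $L_\infty$-quasi-isomorphism of the two-brane formality. The output is a compatible simultaneous deformation of $(A,B,K)$: the Kontsevich $\star$-product on $A_\hbar$, the unital (possibly curved) $A_\infty$-algebra $B_\hbar$ coming from the graded formality of \cite{CF}, and the $A_\infty$-$A_\hbar$-$B_\hbar$-bimodule $K_\hbar$ deforming $K$. From the bimodule structure one extracts the left-action $A_\infty$-morphism $L_{A_\hbar}\colon A_\hbar\to\underline{\mathrm{End}}_{-B_\hbar}(K_\hbar)$. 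To show it is a quasi-isomorphism I would use the standard $\hbar$-adic spectral sequence: both source and target are flat and $\hbar$-adically complete over $\mathbb C[\![\hbar]\!]$, and their reductions modulo $\hbar$ recover the classical map $L_A$ of part $(i)$, which is already a quasi-isomorphism.

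For part $(iii)$, the normalization property should follow from the vanishing of the Kontsevich-type integration weights attached to admissible graphs with a boundary vertex labeled by the unit $1\in B_\hbar$, which is the standard unitality argument for formality morphisms. This implies that $\mathrm d_{K_\hbar}^{j,k}(a_1|\cdots|a_j|1_K|b_1|\cdots|b_k)$ vanishes whenever some $b_i=s1$, so the Taylor components $L_{A_\hbar}^1(a)^k$ vanish on inputs containing $s1$, placing $L_{A_\hbar}^1(a)$ inside $\underline{\mathrm{End}}_{-B_\hbar^+}(K_\hbar)$.

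The main obstacle will lie in part $(ii)$: verifying that the two-brane formality produces a genuinely uncurved $A_\infty$-bimodule structure on $K_\hbar$, i.e.\ that the curvature of $B_\hbar$ is exactly compensated by left-action contributions so that $\mathrm d_{K_\hbar}\circ\mathrm d_{K_\hbar}=0$ on the nose. This compatibility is precisely the content of the two-brane formality theorem of \cite{CFFR}; once it is secured, parts $(i)$ and $(iii)$ reduce to classical Koszul duality and a standard unitality check, respectively.
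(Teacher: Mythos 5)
Your proposal follows essentially the same route as the paper: parts $(i)$ and $(ii)$ are deferred to the two-brane formality theorem of \cite{CFFR} (with the quasi-isomorphism property for $\hbar\neq 0$ reduced to the classical Koszul duality at $\hbar=0$ by flatness and completeness), and part $(iii)$ rests on the vanishing of the weights of admissible graphs in which a boundary vertex carries a scalar $b_i$, which is precisely the dimension-counting unitality argument the paper spells out. No gaps.
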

The proof of (i) and (ii) is contained in \cite{CFFR}. The claim
(iii) follows from the explicit form of the Taylor components
$\mathrm{d}_{K_\hbar}^{1,j}$, given in \cite{CFFR}, appearing in the
definition of $\mathrm{L}^1_A$:
\[
\mathrm{L}^1_{A_\hbar}(a)^j(1|b_1|\cdots|b_j)=\mathrm{d}_{K_\hbar}^{1,j}(a|1|b_1|\dots|b_j).
\]
Namely $\mathrm{d}^{1,j}_{K_\hbar}$ is a power series in $\hbar$
whose term of degree $m$ is a sum over certain directed graphs with $m$ vertices in the complex upper half-plane (vertices of the first type) and $j+2$ ordered vertices on the real axis (vertices of the second type).
To each vertex of the first type is associated a copy of $\hbar\pi$; to the first vertex of the second type is associated $a$, to the second $1$, and to the remaining $j$ vertices the elements $b_i$.
An example of such a graph is depicted in Figure 4, Subsection~\ref{ss-3-2}.

Each graph contributes a multidifferential
operator acting on $a,b_1,\dots,b_j$ times a weight, which is an
integral of a differential form on a compactified configuration space of $m$
points in the complex upper half-plane and $j+2$ ordered points on the real axis modulo dilations and real translations. 
The convention is that to each directed edge of such a graph is associated a derivative acting on the element associated to the final point of the said edge and a $1$-form on the corresponding compactified configuration space.

Therefore, since each $b_i$ may be regarded as a constant polyvector field on $V^*$, there is no edge with final point at a vertex of the second type where a $b_i$ sits (and obviously also where the constant function $1$ sits).
If $j\geq 1$ and $b_i$ belongs to $\mathbb C$ for some $1\leq i\leq j$, the vertex of the second type where $b_i$ sits is neither the starting nor the final point of any directed edge: since $j\geq 1$, the dimension of the corresponding compactified configuration space is strictly positive.
We may use dilations and real translations to fix vertices (of the first and/or second type) distinct from the one where $b_i$ sits: thus, there would be a $1$-dimensional submanifold (corresponding to the interval, where the vertex corresponding to $b_i$ sits), over which there is nothing to integrate, hence the corresponding weight vanishes.

We turn to the description of the differential graded algebra
$\underline{\mathrm{End}}^j_{-B^+_\hbar}(K_\hbar)$. Let
$B^+=\oplus_{j\geq1}\wedge^j(V^*)=\wedge(V^*)/\mathbb C$.
We have
\[
\underline{\mathrm{End}}^j_{-B^+_\hbar}(K_\hbar)= (\oplus_{k\geq
0}\mathrm{Hom}^j(K[1]\otimes B^+[1]^{\otimes k},K[1]))[\![\hbar]\!],
\]
with product
\[
(\phi\cdot\psi)\!(1|b_1|\cdots|b_n)=\sum_k
\psi(1|b_1|\dots|b_k)\phi(1|b_{k+1}|\cdots|b_n).
\]
It follows that the algebra
$\underline{\mathrm{End}}^j_{-B^+_\hbar}(K_\hbar)$ is isomorphic to
the tensor algebra $\mathrm{T} (B^+[1]^*)[\![\hbar]\!]$ generated by
$\mathrm{Hom}(K[1]\otimes B^+[1],K[1])\simeq B^+[1]^*$. In
particular it is concentrated in non-positive degrees.

\begin{Lem}
The restriction $\delta_\hbar\colon B^+[1]^*\to
\mathrm{T}(B^+[1]^*)[\![\hbar]\!]$ of the differential of
$\underline{\mathrm{End}}_{-B^+_\hbar}(K_\hbar) \simeq \mathrm{T}
(B^+[1]^*)[\![\hbar]\!]$ to the generators is dual to the $A_\infty$-structure $\mathrm d_{B_\hbar}$ in the sense that
\[
(\delta_\hbar f)^k(z\otimes b)=(-1)^{|f|}f(z\otimes
\mathrm{d}^k_{B_\hbar}(b)),\ z\in K[1], \ b\in
B[1]^{\otimes k},
\]
for any $f\in\mathrm{Hom}(K[1]\otimes B^+[1],K[1])\simeq B^+[1]^*$
\end{Lem}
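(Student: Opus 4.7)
The plan is to apply the general Taylor-component formula for $\delta_\hbar f = [\mathrm{d}_{K_\hbar}, f]$ derived in the proof of the preceding normalization lemma, specialized to an element $f \in B^+[1]^* \hookrightarrow \mathrm{T}(B^+[1]^*)[\![\hbar]\!] \simeq \underline{\mathrm{End}}_{-B^+_\hbar}(K_\hbar)$. As such a generator, $f$ corresponds to a comodule endomorphism whose only non-vanishing Taylor component is $f^1 \in \mathrm{Hom}(K[1] \otimes B^+[1], K[1])$, so in the three sums of the general formula almost all indices give a zero contribution.

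Evaluating $(\delta_\hbar f)^k$ on $z \otimes b_1 \otimes \cdots \otimes b_k$ then leaves at most three contributions (up to sign): a term of the form $\mathrm{d}_{K_\hbar}^{0,k-1}(f(z, b_1), b_2, \ldots, b_k)$, a term of the form $f(\mathrm{d}_{K_\hbar}^{0,k-1}(z, b_1, \ldots, b_{k-1}), b_k)$, and a term of the form $f(z, \mathrm{d}_{B_\hbar}^k(b_1, \ldots, b_k))$. The first two involve only the pure right-$B_\hbar$-action Taylor components $\mathrm{d}_{K_\hbar}^{0,\bullet}$ of $K_\hbar$, whereas the third is precisely the one the lemma predicts to survive.

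The crux of the proof, and the main obstacle, is to show that the first two contributions vanish. The required input is that the flat deformation $K_\hbar$ supplied by the two-brane formality theorem of \cite{CFFR} has the property that its pure right action on $B^+$ is strictly trivial, i.e.\ $\mathrm{d}_{K_\hbar}^{0, k} \equiv 0$ on $B^+[1]^{\otimes k}$ for every $k \geq 1$. This can be read off from the explicit admissible-graph formula: the diagrams contributing to $\mathrm{d}_{K_\hbar}^{0, k}$ carry no $A_\hbar$-input, and by a translation-dilation dimension argument entirely analogous to the one sketched in the paper just after the formula for $\mathrm{L}^1_{A_\hbar}$, their weights vanish on $B^+$-inputs. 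One may alternatively build this normalization directly into the choice of $K_\hbar$ granted by Proposition~\ref{p-kosz}$(ii)$.

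Once the vanishing of the first two contributions is established, only the third term survives, and a routine sign-bookkeeping matches it with the claimed identity $(\delta_\hbar f)^k(z \otimes b) = (-1)^{|f|} f(z \otimes \mathrm{d}_{B_\hbar}^k(b))$. No further $A_\infty$-structure equations are needed beyond the input already used to establish the preceding normalization lemma.
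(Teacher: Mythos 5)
Your decomposition of $(\delta_\hbar f)^k$ into three terms is exactly the one the paper uses, and the third term is indeed the one that survives. The problem is where you locate the difficulty. You call the vanishing of the first two terms ``the crux of the proof, and the main obstacle,'' and you propose to establish it by an admissible-graph weight computation for $\mathrm{d}_{K_\hbar}^{0,k}$, invoking ``a translation-dilation dimension argument entirely analogous'' to the one in the paper. That analogy does not apply: the argument you cite concerns vertices of the second type labelled by elements of $\mathbb C=\wedge^0(V^*)$, which carry no incident edges, whereas for inputs $b_i\in B^+$ the corresponding vertices do in general carry edges, so the ``nothing to integrate over the interval'' reasoning is not available. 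As written, the key step of your proof is asserted rather than proved, and the assertion leans on an argument that does not transfer.

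The actual reason the first two terms vanish needs none of the explicit graph expansion: it is a one-line degree count, which is what the paper means by ``for degree reasons.'' The bimodule $K_\hbar$ is a flat deformation of the one-dimensional augmentation bimodule, so $K[1]$ is a free rank-one $\mathbb C[\![\hbar]\!]$-module concentrated in degree $-1$; the Taylor components $\mathrm{d}_{K_\hbar}^{0,k}\colon K[1]\otimes B[1]^{\otimes k}\to K[1]$ have degree $+1$; and $B^+[1]$ is concentrated in degrees $\geq 0$. Hence on $K[1]\otimes B^+[1]^{\otimes k}$ the would-be image of $\mathrm{d}_{K_\hbar}^{0,k}$ sits in degrees $\geq 0$, where $K[1]$ vanishes, so $\mathrm{d}_{K_\hbar}^{0,k}$ is identically zero on such inputs (and $\mathrm{d}_{K_\hbar}^{0,0}$ is zero for the same reason). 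This holds for \emph{any} flat deformation of $K$, so there is also nothing to ``build into the choice of $K_\hbar$.'' With this observation your argument closes, and the remaining sign bookkeeping is routine as you say.
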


\begin{proof} The $A_\infty$-structure of $B_\hbar$ is given by
Taylor components $\mathrm{d}^k_{B_\hbar}\colon B[1]^{\otimes k}\to
B[1]$. By definition the differential on
$\underline{\mathrm{End}}^j_{-B^+_\hbar}(K_\hbar)$ is the graded
commutator $\delta_\hbar f=[\mathrm{d}_{K_\hbar},f]$. In terms of
Taylor components,
\begin{align*}
(\delta_\hbar
f)^k(z\otimes b_1\otimes\cdots\otimes b_k)&=\mathrm{d}^{k-1}_{K_\hbar}(f(z\otimes b_1)\otimes b_{2}\otimes \cdots\otimes b_k)-(-1)^{|f|}f(\mathrm{d}^{k-1}_{K_\hbar}(z\otimes b_1\otimes \cdots\otimes b_{k-1})\otimes b_k)+\\
&\phantom{=}+(-1)^{|f|}f(z\otimes \mathrm{d}^k_{B_\hbar}(b_{1}\otimes
\cdots\otimes b_{k})).
\end{align*}
The first two terms vanish if $b_i\in B^+[1]$ for degree reasons.
\end{proof}

Thus $\mathrm{L}_{A_\hbar}$ induces an isomorphism from $A_\hbar$ to
the cohomology in degree $0$ of
$\underline{\mathrm{End}}_{-B^+_\hbar}(K_\hbar)\simeq \mathrm{T}
(B^+[1]^*)[\![\hbar]\!]$.

\begin{Rem} For $\hbar=0$ this complex is Adam's cobar construction
of the graded coalgebra $B^*$, which is a free resolution of $\mathrm
{S}(V)$.
\end{Rem}

\begin{Thm}\label{t-sh}
The composition
\[\mathrm{L}^1_{A_\hbar}\colon A_\hbar\to
\underline{\mathrm{End}}_{-B^+_\hbar}(K_\hbar)\stackrel\simeq\to
\mathrm{T}(B^+[1]^*)[\![\hbar]\!],
\]
induces on cohomology an algebra isomorphism
\[
\mathrm{L}^1_{A_\hbar}\colon A_\hbar\to \mathrm{T}(V)/\left(\mathrm{T}(
V)\otimes \delta_{\hbar}((\wedge^2V^*)^*) \otimes \mathrm{T}(V)\right),
\]
where $\delta_{\hbar}\colon (\wedge^2V^*)^*\to
\mathrm{T}(V)[\![\hbar]\!]$ is dual to $\oplus_{k\geq 0}
\mathrm{d}^k_{B_\hbar}\colon (B^+[1]^0)^{\otimes k}=V^{\otimes k}\to
B^+[1]^1=\wedge^2 V^*$.
\end{Thm}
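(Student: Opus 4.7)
The paragraph preceding the theorem already asserts that $\mathrm L_{A_\hbar}$ induces a $\mathbb C[\![\hbar]\!]$-linear isomorphism from $A_\hbar$ onto the zeroth cohomology of $\mathrm T(B^+[1]^*)[\![\hbar]\!]$. The plan is to identify that $H^0$ explicitly with the quotient $\mathrm T(V)[\![\hbar]\!]/\bigl(\mathrm T(V)\otimes\delta_\hbar((\wedge^2 V^*)^*)\otimes\mathrm T(V)\bigr)$ and then to verify that the induced bijection is multiplicative.

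First I would unpack the grading. Since $B^+[1]^i=\wedge^{i+1}(V^*)$ vanishes for $i<0$, the tensor algebra $\mathrm T(B^+[1]^*)$ is concentrated in non-positive degrees, so $H^0$ is the cokernel of the restricted differential $\delta_\hbar\colon C^{-1}\to C^0$. The degree $0$ generators come from $V\simeq (B^+[1]^0)^*$, giving $C^0=\mathrm T(V)[\![\hbar]\!]$, and the only degree $-1$ generators come from $(\wedge^2 V^*)^*\simeq (B^+[1]^1)^*$, so
\[
C^{-1}=\bigoplus_{a,b\geq 0}\bigl(V^{\otimes a}\otimes(\wedge^2 V^*)^*\otimes V^{\otimes b}\bigr)[\![\hbar]\!].
\]

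Next I would compute the action of $\delta_\hbar$ on $C^{-1}$. Since $\delta_\hbar=[\mathrm d_{K_\hbar},-]$ is the graded commutator with a fixed element in the associative algebra of endomorphisms, it is a graded derivation of the algebra product on $\mathrm T(B^+[1]^*)[\![\hbar]\!]$. On the degree $0$ generators $V$ it must vanish, since $\delta_\hbar$ raises total degree by one and the complex has no positive-degree components. Hence for any basis tensor of $C^{-1}$,
\[
\delta_\hbar\bigl(v_1\otimes\cdots\otimes v_a\otimes\xi\otimes w_1\otimes\cdots\otimes w_b\bigr)=(-1)^a\,v_1\otimes\cdots\otimes v_a\otimes \delta_\hbar(\xi)\otimes w_1\otimes\cdots\otimes w_b,
\]
where $\delta_\hbar(\xi)\in\mathrm T(V)[\![\hbar]\!]$ is the element given by the preceding lemma (whose $k$-th summand is dual to $\mathrm d^k_{B_\hbar}\colon V^{\otimes k}\to\wedge^2 V^*$). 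Consequently the image of $\delta_\hbar$ on $C^{-1}$ equals exactly the two-sided ideal $\mathrm T(V)\otimes\delta_\hbar((\wedge^2 V^*)^*)\otimes\mathrm T(V)$, yielding the claimed description of $H^0$.

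Finally, that this bijection is an algebra map follows from $\mathrm L_{A_\hbar}$ being an $A_\infty$-morphism: since $A_\hbar$ is an honest associative algebra (i.e.\ $\mathrm d^k_{A_\hbar}=0$ for $k\neq 2$), the second defining identity for the $A_\infty$-morphism $\mathrm L_{A_\hbar}$ reduces to
\[
\mathrm L^1_{A_\hbar}(a_1\star a_2)-\mathrm L^1_{A_\hbar}(a_1)\cdot\mathrm L^1_{A_\hbar}(a_2)=\delta_\hbar\bigl(\mathrm L^2_{A_\hbar}(a_1,a_2)\bigr),
\]
so $\mathrm L^1_{A_\hbar}$ is multiplicative modulo exact terms, hence multiplicative on $H^0$. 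The main obstacle is the derivation computation in the second step: one must check that the graded commutator with $\mathrm d_{K_\hbar}$, once transported across the isomorphism $\underline{\mathrm{End}}_{-B^+_\hbar}(K_\hbar)\simeq\mathrm T(B^+[1]^*)[\![\hbar]\!]$, really acts as a derivation of the tensor product on monomials and reduces on the single generator in $(\wedge^2 V^*)^*$ to the formula of the preceding lemma. Once granted, the identification of $H^0$ and hence of $A_\hbar$ with the stated quotient is immediate.
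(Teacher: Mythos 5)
Your proposal is correct and follows essentially the same route as the paper: it identifies $H^0$ of $\mathrm{T}(B^+[1]^*)[\![\hbar]\!]$ as the cokernel of $\delta_\hbar$ on the degree $-1$ part $\mathrm{T}(V)\otimes(\wedge^2V^*)^*\otimes\mathrm{T}(V)$, using that $\delta_\hbar$ vanishes on $V$ for degree reasons, and invokes the quasi-isomorphism $\mathrm{L}_{A_\hbar}$ together with classical Koszul duality at $\hbar=0$ to conclude. The paper's proof is terser but rests on exactly these two ingredients; your additional verification of the derivation property of $\delta_\hbar$ and of multiplicativity via the $A_\infty$-morphism identity only spells out what the paper leaves implicit.
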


\begin{proof}
The fact that the map is an isomorphism follows from the fact that
it is so for $\hbar=0$, by the classical Koszul duality. As the
cohomology is concentrated in degree $0$ it remains so for the
deformed differential $\delta_\hbar$ over $\mathbb C[\![\hbar]\!]$.

As a graded vector space, $B^+[1]^*=V\oplus
(\wedge^2V^*)^*\oplus\cdots$, with $(\wedge^iV^*)^*$ in degree
$1-i$. Therefore the complex $\mathrm{T}(B^+[1]^*)[\![\hbar]\!]$ is
concentrated in non-positive degrees and begins with
\[
\cdots\to \left(\mathrm{T}(V)\otimes (\wedge^2 V^*)^*\otimes
\mathrm{T}(V)\right)[\![\hbar]\!]\to \mathrm{T}(V)[\![\hbar]\!]\to 0.
\]
Thus to compute the degree $0$ cohomology we only need the
restriction of the Taylor components $\mathrm{d}^k_{B_\hbar}$ on
$\mathrm{T}(V^*)=\mathrm{T} (B^+[1])^0$, whose image is in
$B[1]^1=\wedge^2V^*$.
\end{proof}

This theorem gives a presentation of the algebra $A_\hbar$ by
generators and relations. Let $x_1,\dots,x_d\in V$ be a system of
linear coordinates on $V^*$ dual to a basis $e_1,\dots,e_d$. Let for
$I=\{i_1<\cdots<i_k\}\subset\{1,\dots,d\}$, $x_I\in(\wedge^kV^*)^*$
be dual to the basis $e_{i_1}\wedge\cdots\wedge e_{i_k}$. Then
$A_\hbar$ is isomorphic to the algebra generated by $x_1,\dots,x_d$
subject to the relations $\delta_\hbar(x_{ij})=0$. Up to order
$1$ in $\hbar$ the relations are obtained from the cobar
differential and the graph of Figure 1.
\[
\delta_\hbar(x_{ij})=x_i\otimes x_j-x_j\otimes
x_i-\hbar\mathrm{Sym}(\pi_{ij})+O(\hbar^2).
\]
Here $\mathrm{Sym}$ is the symmetrization map $\mathrm{S}(V)\to
\mathrm{T}(V)$.
\bigskip
\begin{center}
\resizebox{0.4 \textwidth}{!}{\begin{picture}(0,0)%
\epsfig{file=dual_deg2-1.pstex}%
\end{picture}%
\setlength{\unitlength}{3947sp}%
\begingroup\makeatletter\ifx\SetFigFont\undefined%
\gdef\SetFigFont#1#2#3#4#5{%
  \reset@font\fontsize{#1}{#2pt}%
  \fontfamily{#3}\fontseries{#4}\fontshape{#5}%
  \selectfont}%
\fi\endgroup%
\begin{picture}(7674,4323)(2239,-6757)
\put(7051,-6661){\makebox(0,0)[lb]{\smash{{\SetFigFont{20}{24.0}{\rmdefault}{\mddefault}{\updefault}{\color[rgb]{0,0,0}$\cdots$}%
}}}}
\put(3121,-6676){\makebox(0,0)[lb]{\smash{{\SetFigFont{20}{24.0}{\rmdefault}{\mddefault}{\updefault}{\color[rgb]{0,0,0}$b_1$}%
}}}}
\put(4021,-6676){\makebox(0,0)[lb]{\smash{{\SetFigFont{20}{24.0}{\rmdefault}{\mddefault}{\updefault}{\color[rgb]{0,0,0}$b_2$}%
}}}}
\put(5071,-6661){\makebox(0,0)[lb]{\smash{{\SetFigFont{20}{24.0}{\rmdefault}{\mddefault}{\updefault}{\color[rgb]{0,0,0}$b_3$}%
}}}}
\put(6226,-6676){\makebox(0,0)[lb]{\smash{{\SetFigFont{20}{24.0}{\rmdefault}{\mddefault}{\updefault}{\color[rgb]{0,0,0}$b_4$}%
}}}}
\put(8176,-6661){\makebox(0,0)[lb]{\smash{{\SetFigFont{20}{24.0}{\rmdefault}{\mddefault}{\updefault}{\color[rgb]{0,0,0}$b_m$}%
}}}}
\put(5941,-3226){\makebox(0,0)[lb]{\smash{{\SetFigFont{20}{24.0}{\rmdefault}{\mddefault}{\updefault}{\color[rgb]{0,0,0}$\pi_\hbar$}%
}}}}
\put(6391,-5641){\makebox(0,0)[lb]{\smash{{\SetFigFont{20}{24.0}{\rmdefault}{\mddefault}{\updefault}{\color[rgb]{0,0,0}$\cdots$}%
}}}}
\end{picture}%
}\\
\text{Figure 1 -  The only admissible graph contributing to $\mathrm{d}_{B_\hbar}^m$ at order $1$ in $\hbar$ } \\
\end{center}
\bigskip
The lowest order of the isomorphism induced by $\mathrm{L}^1_A$ on
generators $x_i\in V$ of $A_\hbar=\mathrm{S}(V)[\![\hbar]\!]$ was
computed in \cite{CFFR}:
\[
\mathrm{L}^1_A(x_i)=x_i+O(\hbar).
\]
The higher order terms $O(\hbar)$ are in general non-trivial (for
example in the case of the dual of a Lie algebra, see below).

By comparing our construction with the arguments in~\cite{Sh2}, we
see that the differential $\mathrm{d}_\hbar$ corresponds to the
image of $\mathcal V(\widehat{\pi_\hbar})$, where the notations are
as in the introduction, by the quasi-isomorphism $\Phi_1$
in~\cite[Subsection 1.4]{Sh2}. Hence, Theorem~\ref{t-sh} provides a
proof of~\cite[Conjecture 2.6]{Sh2} with the amendment that the
isomorphism $A_\hbar\to \mathrm{T}(V)/ \mathcal I_\star$ is not just
given by the symmetrization map but has non-trivial corrections.

\section{Examples}\label{s-3}
We now want to examine more closely certain special cases of
interest. We assume here that the reader has some familiarity with
the graphical techniques of \cites{K, CF, CFFR}. To obtain the
relations $\delta_\hbar(x_{ij})$ we need $\mathrm
d^m_{B_\hbar}(b_1|\cdots|b_m)\in\wedge^2V^*[\![\hbar]\!]$, for $b_i\in
V^*\subset B^+$. The contribution at order $n$ in $\hbar$ to this is
given by a sum over the set $\mathcal G_{n,m}$ of admissible graphs
with $n$ vertices of the first type and $m$ of the second type.

\subsection{The Moyal--Weyl product on $V$}\label{ss-3-1}

Let $\pi_\hbar=\hbar \pi$ be a constant Poisson bivector on $V^*$,
which is uniquely characterized by a complex, skew-symmetric matrix
$d\times d$-matrix $\pi_{ij}$.

In this case, Kontsevich's deformed algebra $A_\hbar$ has an
explicit description: the associative product on $A_\hbar$ is the
Moyal--Weyl product
\[
(f_1\star f_2)=\mathrm m \circ \exp{\frac{1}2 \pi_\hbar},
\]
where $\pi_\hbar$ is viewed here as a bidifferential operator, the
exponential has to be understood as a power series of bidifferential
operators, and $\mathrm m$ denotes the ($\mathbb C[\![\hbar]\!]$-linear)
product on polynomial functions on $V^*$. On the other hand, it is
possible to compute explicitly the complete $A_\infty$-structure on
$B_\hbar$.
\begin{Lem}\label{l-weyl}
For a constant Poisson bivector $\pi_\hbar$ on $V^*$, the $A_\infty$-structure on $B_\hbar$ has only two non-trivial Taylor components, namely
\begin{equation}\label{eq-weyl}
\mathrm d_{B_\hbar}^0(1)=\hbar \pi,\quad \mathrm d_{B_\hbar}^2(b_1|b_2)=(-1)^{|b_1|}b_1\wedge b_2,\quad b_i\in B_\hbar,\quad i=1,2.
\end{equation}
\end{Lem}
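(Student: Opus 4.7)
The plan is to compute the Taylor components $\mathrm d^m_{B_\hbar}$ via the Kontsevich-type graphical expansion coming from the graded formality theorem $\mathcal V$ on $V[1]$ applied to $\widehat{\pi_\hbar}$, in which the order-$\hbar^n$ contribution to $\mathrm d^m_{B_\hbar}$ is a finite sum indexed by admissible graphs $\Gamma\in\mathcal G_{n,m}$, with each first-type vertex decorated with $\widehat{\pi}$ and the $m$ second-type vertices carrying the inputs $b_1,\dots,b_m\in B^+$. The goal is to show that for a constant $\pi$ exactly two graphs, both without any edges, survive.

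The first step is to identify the shape of $\widehat{\pi_\hbar}$ on $V[1]$. Writing $\{e_i\}$ for a basis of $V$ and $\{\xi_i\}$ for the dual basis regarded as coordinates on $V[1]$, the graded Fourier transform sends the constant bivector $\hbar\pi=\hbar\pi^{ij}\partial_{x_i}\wedge\partial_{x_j}$ on $V^*$ to a $0$-polyvector field on $V[1]$, namely the function $\hbar\pi^{ij}\xi_i\xi_j$, which one naturally identifies with $\hbar\pi$ viewed as an element of $\wedge^2V^*\subset B$. Since $\widehat\pi$ has polyvector degree $0$, each first-type vertex is forced to carry zero outgoing edges; the second-type vertices are decorated with functions $b_i\in B^+$ on $V[1]$, hence also carry no outgoing edges. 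Consequently no admissible graph can have any edge at all: its weight reduces to the integral of the constant $1$ over the compactified configuration space of $n$ points in the upper half-plane and $m$ ordered points on the real axis modulo dilations and real translations, whose dimension is $2n+m-2$. This integral is non-zero only when the dimension vanishes, i.e.\ only for $(n,m)=(0,2)$ or $(n,m)=(1,0)$.

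The two surviving contributions are then read off directly. The case $(n,m)=(1,0)$ is a single first-type vertex over a point, so the weight is $1$ and the decoration yields $\mathrm d^0_{B_\hbar}(1)=\hbar\pi$; the case $(n,m)=(0,2)$ has no $\hbar$ correction and reproduces the undeformed graded commutative product on $B$, with the Koszul sign $(-1)^{|b_1|}$ arising from the passage between $\mathrm m^2_B$ and $\mathrm d^2_B$ via the suspension $s\colon B\to B[1]$ (using the identity $s^{\otimes 2}(b_1,b_2)=(-1)^{|b_1|}sb_1\otimes sb_2$), giving $\mathrm d^2_{B_\hbar}(b_1|b_2)=(-1)^{|b_1|}b_1\wedge b_2$. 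The main subtlety is verifying that the graded Fourier dual of a \emph{constant} bivector really lies in polyvector degree $0$ on $V[1]$, so that first-type vertices are forced to have vanishing valence; once this identification is established, the rest is a dimension count together with the short sign computation coming from the suspension convention.
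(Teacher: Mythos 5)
Your proof is correct and follows essentially the same route as the paper: a graph-by-graph analysis of the formality expansion in which the constancy of $\pi$ together with a dimension count on the configuration spaces $C_{n,m}$ leaves only the graphs in $\mathcal G_{1,0}$ and $\mathcal G_{0,2}$, giving the curvature $\hbar\pi$ and the wedge product with its suspension sign. The only difference is organizational: the paper argues case by case in $m$, invoking the vanishing of derivatives of the constant coefficients $\pi_{ij}$ for small $m$ and a separate dimensional argument for $m\geq 3$, whereas you package all cases at once by observing that $\widehat{\pi}$ is a $0$-vector field on $V[1]$, so that no admissible graph carries any edge and the single condition $2n+m-2=0$ does all the work.
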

\begin{proof}
We consider $\mathrm d^m_{B_\hbar}$ first in the case $m=0$. Admissible
graphs contributing to $\mathrm d_{B_\hbar}^0$ belong to $\mathcal
G_{n,0}$, for $n\geq 1$. For $n\geq 2$, all graphs give
contributions involving a derivative of $\pi_{ij}$ and thus vanish.
There remains the only graph in $\mathcal G_{1,0}$, whence the first
identity in~\eqref{eq-weyl}.

By the same reasons, $\mathrm d_{B_\hbar}^m$ is trivial, if $m\geq
1$ and $m\neq 2$: in the case $m=1$, we have to consider
contributions coming from admissible graphs in $\mathcal G_{n,1}$,
with $n\geq 1$, which vanish for the same reasons as in the case
$m=0$.

For $m\geq 3$, contributions coming from admissible graphs in
$\mathcal G_{n,m}$, $n\geq 1$, are trivial by a dimensional
argument.

Finally, once again, the only possibly non-trivial contribution
comes from the unique admissible graph in $\mathcal G_{0,2}$ which
gives the product.
\end{proof}
As a consequence, the differential $\delta_\hbar$ can be explicitly
computed, namely
\[
\delta_{\hbar}(x_{ij})=x_i\otimes x_j-x_j\otimes
x_i-\hbar\pi_{ij}.
\]
This provides the description of the Moyal--Weyl algebra as the
algebra generated by $x_i$ with relations $[x_i,x_j]=\hbar\pi_{ij}$.

We finally observe that the quasi-isomorphism $\mathrm L_{A_\hbar}^1$ coincides, by a direct computation, with the usual symmetrization morphism.

\subsection{The universal enveloping algebra of a finite-dimensional Lie algebra $\mathfrak g$}\label{ss-3-2}
We now consider a finite-dimensional complex Lie algebra
$V=\mathfrak g$: its dual space $\mathfrak g^*$ with
Kirillov--Kostant-Souriau Poisson structure. With respect to a basis
$\{x_i\}$ of $\mathfrak g$, we have
\[
\pi=f_{ij}^k x_k\partial_i\wedge \partial_j,
\]
where $f_{ij}^k$ denote the structure constant of $\mathfrak g$ for
the chosen basis.

It has been proved in~\cite[Subsubsection 8.3.1]{K} that
Kontsevich's deformed algebra $A_\hbar$ is isomorphic to the
universal enveloping algebra $\mathrm U_\hbar(\mathfrak g)$ of
$\mathfrak g[\![\hbar]\!]$ for the $\hbar$-shifted Lie bracket $\hbar[\
,\ ]$.

On the other hand, we may, once again, compute explicitly the $A_\infty$-structure on $B_\hbar$.
\begin{Lem}\label{l-CE}
The $A_\infty$-algebra $B_\hbar$ determined by $\pi_\hbar$, where
$\pi$ is the Kirillov--Kostant--Souriau Poisson structure on
$\mathfrak g^*$, has only two non-trivial Taylor components, namely
\begin{equation}\label{eq-CE}
\mathrm d_{B_\hbar}^1(b_1)=\mathrm d_{\mathrm{CE}}(b_1),\quad \mathrm d_{B_\hbar}^2(b_1|b_2)=(-1)^{|b_1|}b_1\wedge b_2,\quad b_i\in B_\hbar,\quad i=1,2,
\end{equation}
where $\mathrm d_{\mathrm{CE}}$ denotes the Chevalley--Eilenberg differential of $\mathfrak g$, endowed with the rescaled Poisson bracket $\hbar[\bullet,\bullet]$.
\end{Lem}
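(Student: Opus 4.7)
The plan is to mimic the graph-enumeration argument in the proof of Lemma~\ref{l-weyl}, replacing the constancy of $\pi$ by its linearity. In the Fourier-dual picture on $V[1]$ the Kirillov--Kostant--Souriau bivector corresponds to the $1$-polyvector $\hat\pi = f_{ij}^k \xi_i\xi_j \partial_{\xi_k}$, so every first-type vertex of any graph in $\mathcal G_{n,m}$ carries exactly one outgoing edge (the $\partial_{\xi_k}$) together with a quadratic coefficient $f_{ij}^k \xi_i\xi_j$ of polynomial $\xi$-degree $2$.

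The first step is a polynomial-degree count that forces $n = 2 - m$. If $k_v$ (resp.\ $k_i$) denotes the number of edges incoming at the first-type vertex $v$ (resp.\ at the second-type vertex carrying $b_i$), the output of a graph has $\xi$-degree
\[
\sum_v (2 - k_v) + \sum_i (|b_i| - k_i) = 2n + \sum_i |b_i| - n = n + \sum_i |b_i|,
\]
using that total incoming equals total outgoing equals $n$. This must match the degree $\sum_i |b_i| + 2 - m$ dictated by the $A_\infty$-structure, hence $n = 2 - m$. This immediately disposes of $m \ge 3$. For $m = 2, n = 0$ the unique graph has no first-type vertex and returns the undeformed graded product $(-1)^{|b_1|} b_1\wedge b_2$. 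For $m = 1, n = 1$ the forbidding of tadpoles leaves only the graph whose single outgoing edge lands on $b_1$; its Kontsevich weight on the one-dimensional configuration space $C_{1,1}$ equals $1$ and its operator is $f_{ij}^k \xi_i \wedge \xi_j \wedge \partial_{\xi_k} b_1 = \mathrm d_{\mathrm{CE}}(b_1)$.

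The remaining and subtle case, which is the main obstacle, is $m = 0, n = 2$, where one must check that the curvature $\mathrm d_{B_\hbar}^0(1)$ really vanishes. Since each of the two first-type vertices carries exactly one outgoing edge and tadpoles are disallowed, the only admissible graph is the $2$-cycle $v_1 \rightleftarrows v_2$, whose weight is the integral of $\omega_{12}\wedge \omega_{21}$ over the two-dimensional configuration space $C_{2,0}$, with $\omega_{ij}$ the Kontsevich angle $1$-forms associated to each edge. The involution of $C_{2,0}$ swapping the two upper-half-plane points preserves the measure but interchanges $\omega_{12}$ and $\omega_{21}$, so the integrand flips sign and the weight vanishes. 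Equivalently, after antisymmetrization through the wedge product in $\xi$, the operator produced by this graph is a multiple of the Jacobi combination of structure constants and is therefore zero.
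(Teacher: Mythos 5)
Your overall strategy is the same as the paper's: a polynomial-degree count forcing $n+m=2$, followed by an enumeration of the surviving graphs, with the $(n,m)=(2,0)$ two-cycle killed by the vanishing of its weight (your swap-involution argument is precisely the content of Kontsevich's Lemma~7.3.1.1, which is what the paper cites at this point; your alternative operator-level argument is also valid, though the symmetric tensor that gets contracted against $\xi_j\wedge\xi_m$ is the Killing form $\mathrm{tr}(\mathrm{ad}\,X\,\mathrm{ad}\,Y)$, not a ``Jacobi combination''). The degree count itself is fine and reproduces the paper's condition $n+m=2$, and the cases $m\geq 3$, $m=2$ and $m=1$ are handled as in the paper, up to your passage to the Fourier-dual picture on $V[1]$ where $\pi$ becomes a vector field quadratic in the odd coordinates.

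The one genuine gap is your repeated appeal to ``the forbidding of tadpoles.'' In the two-brane formality theorem of \cite{CFFR}, which is what actually defines $B_\hbar$ here, short loops are \emph{not} forbidden: the paper itself insists on this in Subsection~\ref{ss-3-2}, where the $1$-wheel $\mathcal W_1$ (a vertex with a short loop) contributes with weight $-1/4$ to $\mathrm{L}^1_{A_\hbar}$. Short-loop graphs are not eliminated by your degree count either: a loop at a first-type vertex differentiates that vertex's own quadratic coefficient once, so for instance the graph with $(n,m)=(2,0)$ in which $v_1$ carries a short loop and $v_2$ points at $v_1$, and the graph with $(n,m)=(1,1)$ consisting of a single looped vertex disjoint from $b_1$, both have the correct output degree. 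Their operators involve $\mathrm{tr}_{\mathfrak g}\circ\mathrm{ad}$ and do not vanish identically for non-unimodular $\mathfrak g$, so one must argue separately that these configurations do not contribute (e.g.\ by computing the relevant diagonal components of the four-colored superpropagator, or by a symmetry of the resulting operator where available). Without that, the identification of $\mathrm d^1_{B_\hbar}$ with the Chevalley--Eilenberg differential, and the vanishing of the curvature, are not fully established.
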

\begin{proof}
By dimensional arguments and because of the linearity of
$\pi_\hbar$, there are only two admissible graphs in $\mathcal
G_{1,0}$ and $\mathcal G_{2,0}$, which may contribute non-trivially
to the curvature of $B_\hbar$, namely,
\bigskip
\begin{center}
\resizebox{0.6 \textwidth}{!}{\begin{picture}(0,0)%
\epsfig{file=CE-graph.pstex}%
\end{picture}%
\setlength{\unitlength}{3947sp}%
\begingroup\makeatletter\ifx\SetFigFont\undefined%
\gdef\SetFigFont#1#2#3#4#5{%
  \reset@font\fontsize{#1}{#2pt}%
  \fontfamily{#3}\fontseries{#4}\fontshape{#5}%
  \selectfont}%
\fi\endgroup%
\begin{picture}(11274,2784)(739,-6373)
\end{picture}%
}\\
\text{Figure 2 -  The only admissible graphs in $\mathcal G_{1,0}$ and $\mathcal G_{2,0}$ respectively in the curvature of $B_\hbar$ } \\
\end{center}
\bigskip
The operator $\mathcal O_\Gamma^B$ for the graph in $\mathcal
G_{1,0}$ vanishes, when setting $x=0$. On the other hand, $\mathcal
O_\Gamma^B$ vanishes in virtue of~\cite[Lemma 7.3.1.1]{K}.

We now consider the case $m\geq 1$. We consider an admissible graph
$\Gamma$ in $\mathcal G_{n,m}$ and the corresponding operator
$\mathcal O_\Gamma^B$: the degree of the operator-valued form
$\omega_\Gamma^B$ equals the number of derivations acting on the
different entries associated to vertices either of the first or
second type. Thus, the operator $\mathcal O_\Gamma^B$ has a
polynomial part (since all structures are involved are polynomial on
$\mathfrak g^*$): since the polynomial part of any of its arguments
in $B_\hbar$ has degree $0$, the polynomial degree of $\mathcal
O_\Gamma^B$ must be also $0$. A direct computation shows that this
condition is satisfied if and only if $n+m=2$, because $\pi_\hbar$
is linear.

Obviously, the previous identity is never satisfied if $m\geq 3$,
which implies immediately that the only non-trivial Taylor
components appear when $m=1$ and $m=2$. When $m=1$, the previous
equality forces $n=1$: there is only one admissible graph $\Gamma$
in $\mathcal G_{1,1}$, whose corresponding operator is non-trivial,
namely,
\bigskip
\begin{center}
\resizebox{0.35 \textwidth}{!}{\begin{picture}(0,0)%
\epsfig{file=CE-graph-1.pstex}%
\end{picture}%
\setlength{\unitlength}{3947sp}%
\begingroup\makeatletter\ifx\SetFigFont\undefined%
\gdef\SetFigFont#1#2#3#4#5{%
  \reset@font\fontsize{#1}{#2pt}%
  \fontfamily{#3}\fontseries{#4}\fontshape{#5}%
  \selectfont}%
\fi\endgroup%
\begin{picture}(4674,2525)(3889,-6414)
\put(6226,-5626){\makebox(0,0)[lb]{\smash{{\SetFigFont{20}{24.0}{\rmdefault}{\mddefault}{\updefault}{\color[rgb]{0,0,0}$\cdots$}%
}}}}
\put(5596,-5191){\makebox(0,0)[lb]{\smash{{\SetFigFont{20}{24.0}{\rmdefault}{\mddefault}{\updefault}{\color[rgb]{0,0,0}$\cdots$}%
}}}}
\end{picture}%
}\\
\text{Figure 3 -  The only admissible graph in $\mathcal G_{1,1}$ contributing to $\mathrm d_{B_\hbar}^1$ } \\
\end{center}
\bigskip
The weight is readily computed, and the identification with the
Chevalley--Eilenberg differential is then obvious.

Finally, when $m=2$, the result is clear by previous computations.
\end{proof}
Thus $\delta_\hbar$ is given by
\[
\delta_\hbar(x_{ij})= x_{i}\otimes x_{j}-x_{j}\otimes x_{i}-\hbar
\sum_{k}f_{ij}^k x_k.
\]
Hence we reproduce the result that $A_\hbar$ is isomorphic to
$\mathrm U_\hbar(\mathfrak g)$.
We now want to give an explicit expression for the isomorphism $\mathrm L^1_{A_\hbar}$.

We consider the expression $\mathrm L^1_{A_\hbar}(a)^m(1|b_1|\cdots|b_m)=\mathrm d_{K_\hbar}^{1,m}(a|1|b_1|\cdots|b_m)$.
Degree reasons imply that the sum of the degrees of the elements $b_i$ equals $m$; furthermore, if the degree of some $b_i$ is strictly bigger than $1$, the previous equality forces a different $b_j$ to have degree $0$, whence the corresponding expression vanishes by Proposition~\ref{p-kosz}, $(iii)$.
Hence, the degree of each $b_i$ is precisely $1$.
We now consider a general graph $\Gamma$ with $n$ vertices of the first type and $m+2$ ordered vertices of the second type; to each vertex of the first type is associated a copy of $\hbar\pi$, while to the ordered vertices of the second type are associated $a$, $1$ and the $b_i$s in lexicographical order.
We denote by $p$ the number of edges departing from the $n$ vertices of the first type and hitting the first vertex of the second type (observe that in this situation edges departing from vertices of the first type can only hit vertices of the first type or the first vertex of the second type): in the present framework, edges have only one color (we refer to~\cite[Section 7]{CFFR} and~\cite[Subsection 3.2]{CRT} for more details on the $4$-colored propagators and corresponding superpropagators entering the $2$ brane Formality Theorem), thus there can be {\bf at most} one edge hitting the first vertex of the second type, whence $p\leq n$.
We now compute the polynomial degree of the multidifferential operator associated to the graph $\Gamma$: it equals $n-j-(2n-p)=p-j-n$, where $0\leq j\leq m$ is the number of edges from the last $m$ vertices of the second type hitting vertices of the first type.
The first $n$ comes from the fact that $\pi$ is a linear bivector field.
As $p-j-n\geq 0$ and $p\leq n$, it follows immediately $p=n$ and $j=0$, {\em i.e.} the edges departing from the last $m$ vertices of the second type all hit the first vertex of the second type, and from each vertex of the first type departs exactly one edge hitting the first vertex of the second type; the remaining $n$ edges must hit a vertex of the first type.

In summary, a general graph $\Gamma$ appearing in $\mathrm L_{A_\hbar}^1(a)(1|b_1|\cdots|b_m)$ is the disjoint union of wheel-like graphs $\mathcal W_n$, $n\geq 1$, and of the graph $\beta_m$, $m\geq 0$; such graphs are depicted in Figure 4.

Observe that the $1$-wheel $\mathcal W_1$ appears here explicitly because of the presence of short loops in the $2$ brane Formality Theorem~\cite[]{CFFR}: the integral weight of the $1$-wheel has been computed in~\cite{CRT} and equals $-1/4$, while the corresponding translation invariant differential operator is the trace of the adjoint representation of $\mathfrak g$.
Any multiple of $c_1=\mathrm{tr}_\mathfrak g\circ\mathrm{ad}$ defines a constant vector field on $\mathfrak g^*$: either as an easy consequence of the Formality Theorem of Kontsevich~\cite{K} or by an explicit computation using Stokes' Theorem, $c_1$ is a derivation of $(A_\hbar,\star)$, where $\star$ is the deformed product on $A_\hbar$ {\em via} Kontsevich's deformation quantization.

The integral weight of the graph $\beta_m$ is $1/m!$ and the corresponding multidifferential operator is simply the symmetrization morphism; the integral weight of the wheel-like graph $\mathcal W_n$, $n\geq 2$, has been computed in~\cite{W, VdB} (observe that, except the case $n=1$, the integral weights of $\mathcal W_n$ for $n$ odd vanish) and equal the modified Bernoulli numbers, and the corresponding translation-invariant differential operators are $c_n=\mathrm{tr}_\mathfrak g(\mathrm{ad}^n(\bullet))$.
\bigskip
\begin{center}
\resizebox{0.45 \textwidth}{!}{\begin{picture}(0,0)%
\includegraphics{Wheel_symm.pstex}%
\end{picture}%
\setlength{\unitlength}{3947sp}%
\begingroup\makeatletter\ifx\SetFigFont\undefined%
\gdef\SetFigFont#1#2#3#4#5{%
  \reset@font\fontsize{#1}{#2pt}%
  \fontfamily{#3}\fontseries{#4}\fontshape{#5}%
  \selectfont}%
\fi\endgroup%
\begin{picture}(11300,3722)(1163,-6757)
\put(4336,-3676){\makebox(0,0)[lb]{\smash{{\SetFigFont{20}{24.0}{\rmdefault}{\mddefault}{\updefault}{\color[rgb]{0,0,0}$\mathcal W_5$}%
}}}}
\put(10201,-6661){\makebox(0,0)[lb]{\smash{{\SetFigFont{20}{24.0}{\rmdefault}{\mddefault}{\updefault}{\color[rgb]{0,0,0}$\cdots$}%
}}}}
\put(10891,-4681){\makebox(0,0)[lb]{\smash{{\SetFigFont{20}{24.0}{\rmdefault}{\mddefault}{\updefault}{\color[rgb]{0,0,0}$\beta_m$}%
}}}}
\end{picture}%
}\\
\text{Figure 4 - The wheel $\mathcal W_5$ with $5$ spokes and the graph $\beta_m$} \\
\end{center}
\bigskip 
Therefore, the isomorphism $\mathrm L_{A_\hbar}^1$ (for $\hbar=1$) equals the composition of the PBW isomorphism from $\mathrm S(\mathfrak g)$ to $\mathrm U(\mathfrak g)$ with Duflo's strange automorphism; the derivation $-1/4\ c_1$ of the deformed algebra $(A,\star)$ is exponentiated to an automorphism of the same algebra.
(The fact that $\pi$ is linear permits to set $\hbar=1$, see also~\cite[Subsubsection 8.3.1]{K} for an explanation.)



\subsection{Quadratic algebras}\label{ss-3-3}
Here we briefly discuss the case where $V^*$ is endowed with a
quadratic Poisson bivector field $\pi$: this case has been already
considered in detail in~\cite[Section 8]{CFFR}, see also~\cite{Sh},
where the property of the deformation associated $\pi_\hbar$ of
preserving the property of being Koszul has been proved.

The main feature of the quadratic case is the degree $0$ homogeneity
of the Poisson bivector field, which reflects itself in the
homogeneity of all structure maps. In particular the Kontsevich
star-product on a basis of linear functions has the form
\[
x_i\star x_j=x_ix_j +\sum_{k,l} S^{kl}_{ij}(\hbar)x_kx_l,
\]
for some $S^{kl}_{ij}\in\hbar\mathbb C[\![\hbar]\!]$. Our results
implies that this algebra is isomorphic to the quotient of the
tensor algebra in generators $x_i$ by relations
\[
x_i\otimes x_j-x_j\otimes x_i=\sum_{k,l}R^{kl}_{ij}(\hbar)x_k\otimes
x_l,
\]
for some $R^{kl}_{ij}(\hbar)\in\hbar\mathbb C[\![\hbar]\!]$. The
isomorphism sends $x_i$ to
\[
\mathrm L_{A_\hbar}(x_i)=x_i+\sum_j L^j_i(\hbar)x_j,
\]
for some $L^j_i(\hbar)\in\hbar\mathbb C[\![\hbar]\!]$.

\subsection{A final remark}\label{ss-3-4}
We point out that, in~\cite{BG}, the authors construct a flat
$\hbar$-deformation between a so-called non-homogeneous quadratic
algebra and the associated quadratic algebra: the characterization
of the non-homogeneous quadratic algebra at hand is in terms of two
linear maps $\alpha$,  $\beta$, from $R$ onto $V$ and $\mathbb C$
respectively, which satisfy certain cohomological conditions. In the
case at hand, it is not difficult to prove that the conditions on
$\alpha$ and $\beta$ imply that their sum defines an affine Poisson
bivector on $V^*$: hence, instead of considering $\alpha$ and
$\beta$ separately, as in~\cite{BG}, we treat them together. Both
deformations are equivalent, in view of the uniqueness of flat
deformations yielding the PBW property, see~\cite{BG}.

\begin{bibdiv}
\begin{biblist}

\bib{BG}{article}{
   author={Braverman, Alexander},
   author={Gaitsgory, Dennis},
   title={Poincar\'e-Birkhoff-Witt theorem for quadratic algebras of Koszul
   type},
   journal={J. Algebra},
   volume={181},
   date={1996},
   number={2},
   pages={315--328},
   issn={0021-8693},
   review={\MR{1383469 (96m:16012)}},
}

\bib{CFFR}{article}{
   author={Calaque, Damien},
   author={Felder, Giovanni},
   author={Ferrario, Andrea},
   author={Rossi, Carlo A.},
   title={Bimodules and branes in deformation quantization },
   journal={Comp. Math.}   
   volume={147}
   date={2011}
   number={01}
   pages={105-160}
   eprint={arXiv:0908.2299},
}

\bib{CF}{article}{
   author={Cattaneo, Alberto S.},
   author={Felder, Giovanni},
   title={Relative formality theorem and quantisation of coisotropic
   submanifolds},
   journal={Adv. Math.},
   volume={208},
   date={2007},
   number={2},
   pages={521--548},
   issn={0001-8708},
   review={\MR{2304327 (2008b:53119)}},
}

\bib{CRT}{article}{
   author={Cattaneo, Alberto S.},
   author={Rossi, Carlo A.}
   author={Torossian, Charles},
   title={Biquantization of symmetric pairs and the quantum shift (in preparation)},
}

\bib{Keller}{article}{
   author={Keller, Bernhard},
   title={Introduction to $A$-infinity algebras and modules},
   journal={Homology Homotopy Appl.},
   volume={3},
   date={2001},
   number={1},
   pages={1--35 (electronic)},
   issn={1512-0139},
   review={\MR{1854636 (2004a:18008a)}},
}

\bib{K}{article}{
   author={Kontsevich, Maxim},
   title={Deformation quantization of Poisson manifolds},
   journal={Lett. Math. Phys.},
   volume={66},
   date={2003},
   number={3},
   pages={157--216},
   issn={0377-9017},
   review={\MR{2062626 (2005i:53122)}},
}

\bib{Sh2}{article}{
  author={Shoikhet, Boris},
  title={Kontsevich formality and PBW algebras},
  eprint={arXiv:0708.1634},
  date={2007}
}

\bib{Sh}{article}{
   author={Shoikhet, Boris},
   title={Koszul duality in deformation quantization and Tamarkin's approach
   to Kontsevich formality},
   journal={Adv. Math.},
   volume={224},
   date={2010},
   number={3},
   pages={731--771},
   issn={0001-8708},
   review={\MR{2628794}},
   doi={10.1016/j.aim.2009.12.010},
}

\bib{VdB}{article}{
   author={Van den Bergh, Michel},
   title={The Kontsevich weight of a wheel with spokes pointing outward},
   journal={Algebr. Represent. Theory},
   volume={12},
   date={2009},
   number={2-5},
   pages={443--479},
   issn={1386-923X},
   review={\MR{2501196}},
}

\bib{W}{article}{
   author={Willwacher, Thomas},
   title={A counterexample to the quantizability of modules},
   journal={Lett. Math. Phys.},
   volume={81},
   date={2007},
   number={3},
   pages={265--280},
   issn={0377-9017},
   review={\MR{2355492 (2008j:53160)}},
}

\end{biblist}
\end{bibdiv}

\end{document}